\theoremstyle{plain}
\newtheorem{corollary}{Corollary}
\newtheorem{lemma}{Lemma}
\newtheorem{proposition}{Proposition}
\newtheorem{remark}{Remark}
\newtheorem{theorem}{Theorem}
\numberwithin{equation}{section}
\begin{document}
\title[Spectral Radius Inequalities]{Spectral Radius Inequalities for
Functions of Operators Defined by Power Series}
\author{S.S. Dragomir$^{1,2}$}
\address{$^{1}$Mathematics, School of Engineering \& Science\\
Victoria University, PO Box 14428\\
Melbourne City, MC 8001, Australia.}
\email{sever.dragomir@vu.edu.au}
\urladdr{http://rgmia.org/dragomir}
\address{$^{2}$School of Computational \& Applied Mathematics, University of
the Witwatersrand, Private Bag 3, Johannesburg 2050, South Africa}
\subjclass{47A63; 47A99.}
\keywords{Bounded linear operators, Functions of operators, Numerical
radius, Power series}

\begin{abstract}
By the help of power series $f\left( z\right) =\sum_{n=0}^{\infty
}a_{n}z^{n} $ we can naturally construct another power series that has as
coefficients the absolute values of the coefficients of $f$, namely $%
f_{a}\left( z\right) :=\sum_{n=0}^{\infty }\left\vert a_{n}\right\vert
z^{n}. $ Utilising these functions we show among others that%
\begin{equation*}
r\left[ f\left( T\right) \right] \leq f_{a}\left[ r\left( T\right) \right]
\end{equation*}%
where $r\left( T\right) $ denotes the spectral radius of the bounded linear
operator $T$ on a complex Hilbert space while $\left\Vert T\right\Vert $ is
its norm. When we have $A$ and $B$ two commuting operators, then%
\begin{equation*}
r^{2}\left[ f\left( AB\right) \right] \leq f_{a}\left( r^{2}\left( A\right)
\right) f_{a}\left( r^{2}\left( B\right) \right)
\end{equation*}%
and%
\begin{equation*}
r\left[ f\left( AB\right) \right] \leq \frac{1}{2}\left[ f_{a}\left(
\left\Vert AB\right\Vert \right) +f_{a}\left( \left\Vert A^{2}\right\Vert
^{1/2}\left\Vert B^{2}\right\Vert ^{1/2}\right) \right] .
\end{equation*}
\end{abstract}

\maketitle

\section{Introduction}

Let $B(H$) denote the algebra of all bounded linear operators on a complex
Hilbert space $H$. For an operator $T\in B(H)$, let $r(T)$ and $\left\Vert
T\right\Vert $ denote the \textit{spectral radius} and the usual \textit{%
operator norm} of \textit{A}, respectively. It is well known that for every $%
T\in B(H$), we have the fundamental inequality 
\begin{equation}
r(T)\leq \left\Vert T\right\Vert  \label{e.0.1}
\end{equation}
and that equality holds in the inequality (\ref{e.0.1}) if $T$ is normal.

In addition to the inequality (\ref{e.0.1}), the most important properties
of the spectral radius are the \textit{spectral radius formula}

\begin{equation}
r(T)=\lim_{n\rightarrow \infty }\left\Vert T^{n}\right\Vert ^{1/n},
\label{e.0.2}
\end{equation}%
a special case of the \textit{spectral mapping theorem}, which asserts that

\begin{equation}
r(T^{m})=r^{m}(T)  \label{e.0.3}
\end{equation}
for every natural number $m$, and the \textit{commutativity property}, which
asserts that

\begin{equation}
r(AB)=r(BA)\text{ for every }A,B\in B(H).  \label{e.0.4}
\end{equation}%
It follows from the spectral radius formula (2) that if $A,B\in B(H)$ are 
\textit{commutative} then the following \textit{subadditivity}

\begin{equation}
r(A+B)\leq r(A)+r(B)  \label{e.0.5}
\end{equation}%
and \textit{submultiplicativity} 
\begin{equation}
r(AB)\leq r(A)r(B)  \label{e.0.6}
\end{equation}%
properties hold.

For additional properties of the spectral radius, the reader is referred to
the classical book \cite{H} and the papers \cite{LD}-\cite{Y}.

There are simple examples, see for instance \cite{Ki}, showing that the
properties (\ref{e.0.5}) and (\ref{e.0.6}) are not true for non-commutative
operators $A$ and $B.$

In \cite{Ki} the author has proved the following inequality%
\begin{align}
& r\left( AB\pm BA\right)  \label{e.0.7} \\
& \leq \frac{1}{2}\left( \left\Vert AB\right\Vert +\left\Vert BA\right\Vert +%
\sqrt{\left( \left\Vert AB\right\Vert -\left\Vert BA\right\Vert \right)
^{2}+4\left\Vert A^{2}\right\Vert \left\Vert B^{2}\right\Vert }\right) 
\notag
\end{align}%
for any $A,B\in B(H).$

If $A$ and $B$ are commutative, then from (\ref{e.0.7}) we get%
\begin{equation}
r\left( AB\right) \leq \frac{1}{2}\left( \left\Vert AB\right\Vert
+\left\Vert A^{2}\right\Vert ^{1/2}\left\Vert B^{2}\right\Vert ^{1/2}\right)
,  \label{e.0.8}
\end{equation}%
which is of interest in itself and also has some nice applications for
functions of operators as follows.

In the same paper \cite{Ki}, the author also provided the inequality below%
\begin{eqnarray}
r\left( AB\pm BA\right) &\leq &\left\Vert AB\right\Vert +\min \left\{
\left\Vert A\right\Vert ^{1/2}\left\Vert AB^{2}\right\Vert ^{1/2},\left\Vert
A^{2}B\right\Vert ^{1/2}\left\Vert B\right\Vert ^{1/2}\right\}  \label{e.0.9}
\\
&\leq &\left\Vert AB\right\Vert +\left\{ 
\begin{array}{ll}
\left\Vert A\right\Vert ^{1/2}\left\Vert B\right\Vert ^{1/2}\left\Vert
AB\right\Vert ^{1/2}, &  \\ 
&  \\ 
\min \left\{ \left\Vert A\right\Vert \left\Vert B^{2}\right\Vert
^{1/2},\left\Vert A^{2}\right\Vert ^{1/2}\left\Vert B\right\Vert \right\} ,
& 
\end{array}%
\right.  \notag
\end{eqnarray}%
which produces in the case of commutative $A$ and $B$ the string of
inequalities that are also useful in what follows:%
\begin{align}
r\left( AB\right) & \leq \frac{1}{2}\left[ \left\Vert AB\right\Vert +\min
\left\{ \left\Vert A\right\Vert ^{1/2}\left\Vert AB^{2}\right\Vert
^{1/2},\left\Vert A^{2}B\right\Vert ^{1/2}\left\Vert B\right\Vert
^{1/2}\right\} \right]  \label{e.0.10} \\
& \leq \frac{1}{2}\left\Vert AB\right\Vert +\frac{1}{2}\times \left\{ 
\begin{array}{ll}
\left\Vert A\right\Vert ^{1/2}\left\Vert B\right\Vert ^{1/2}\left\Vert
AB\right\Vert ^{1/2}, &  \\ 
&  \\ 
\min \left\{ \left\Vert A\right\Vert \left\Vert B^{2}\right\Vert
^{1/2},\left\Vert A^{2}\right\Vert ^{1/2}\left\Vert B\right\Vert \right\} .
& 
\end{array}%
\right.  \notag
\end{align}

Motivated by the above results we establish in this paper some inequalities
for the spectral radius of functions of operators defined by power series,
which incorporate many fundamental functions of interest such as the
exponential function, some trigonometric functions, the functions $f\left(
z\right) =\left( 1\pm z\right) ^{-1},$ $g\left( z\right) =\log \left( 1\pm
z\right) ^{-1}$ and others. Some examples of interest are also provided.

\section{Inequalities for One Operator}

We start with the following lemmas.

\begin{lemma}
\label{l.2.1}Let $\left( V_{j}\right) _{j\in \mathbb{N}}$ be a sequence of
bounded linear operators such that $V_{j}V_{k}=V_{k}V_{j}$ for any $j,k\in 
\mathbb{N}$. The for any $m\in \mathbb{N}$, $m\geq 1$ we have%
\begin{equation}
r\left( \sum_{j=0}^{m}V_{j}\right) \leq \sum_{j=0}^{m}r\left( V_{j}\right) .
\label{e.2.1}
\end{equation}
\end{lemma}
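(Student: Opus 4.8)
The plan is to argue by induction on $m$, with the two--operator subadditivity inequality (\ref{e.0.5}) as the engine. For $m=0$ there is nothing to prove, and the case $m=1$ is exactly (\ref{e.0.5}) applied to the pair $V_{0},V_{1}$, which is legitimate since the hypothesis guarantees $V_{0}V_{1}=V_{1}V_{0}$.

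For the inductive step, assume (\ref{e.2.1}) holds for a given $m\geq 1$, applied to the prefix $V_{0},\dots ,V_{m}$. Put $A:=\sum_{j=0}^{m}V_{j}$ and $B:=V_{m+1}$. The key observation is that $A$ and $B$ commute: by linearity $AB=\sum_{j=0}^{m}V_{j}V_{m+1}=\sum_{j=0}^{m}V_{m+1}V_{j}=BA$, using $V_{j}V_{m+1}=V_{m+1}V_{j}$ for each $j\leq m$. Hence (\ref{e.0.5}) applies to the pair $(A,B)$ and, combined with the induction hypothesis for $A$, gives
\[
r\!\left( \sum_{j=0}^{m+1}V_{j}\right) =r(A+B)\leq r(A)+r(B)\leq \sum_{j=0}^{m}r(V_{j})+r(V_{m+1})=\sum_{j=0}^{m+1}r(V_{j}),
\]
which is (\ref{e.2.1}) for $m+1$ and closes the induction.

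I do not expect a genuine obstacle here. The only point requiring (routine) care is to record at each stage that a partial sum of the $V_{j}$'s still commutes with the next summand, so that the two--operator inequality (\ref{e.0.5}) — and, through it, the spectral radius formula (\ref{e.0.2}) on which it rests — may legitimately be invoked. An alternative, non--inductive route would be to note that all the $V_{j}$ lie in a commutative Banach subalgebra of $B(H)$ (for instance the closed subalgebra they generate), on which $r$ is a seminorm via the Gelfand transform; but the induction above is shorter and uses only what has already been established.
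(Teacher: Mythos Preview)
Your proof is correct and follows essentially the same route as the paper: induction on $m$, with the base case $m=1$ given by (\ref{e.0.5}) and the inductive step obtained by applying (\ref{e.0.5}) to the commuting pair $\sum_{j=0}^{m}V_{j}$ and $V_{m+1}$. Your explicit verification that the partial sum commutes with $V_{m+1}$ is a welcome bit of extra care, and the aside on the Gelfand-theoretic alternative is accurate but, as you note, unnecessary here.
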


\begin{proof}
By induction over $m.$

If $m=1,$ the inequality follows by (\ref{e.0.5}).

Assume that (\ref{e.2.1}) is true for $m>1.$ Since the operators $%
\sum_{j=0}^{m}V_{j}$ and $V_{m+1}$ are commutative, then by (\ref{e.0.5}) we
also have%
\begin{align*}
r\left( \sum_{j=0}^{m+1}V_{j}\right) & =r\left(
\sum_{j=0}^{m}V_{j}+V_{m+1}\right) \leq r\left( \sum_{j=0}^{m}V_{j}\right)
+r\left( V_{m+1}\right) \\
& \leq \sum_{j=0}^{m}r\left( V_{j}\right) +r\left( V_{m+1}\right)
=\sum_{j=0}^{m+1}r\left( V_{j}\right) ,
\end{align*}%
where for the last inequality we used the induction hypothesis.

This proves the inequality (\ref{e.2.1}) for any $m\geq 1.$
\end{proof}

\begin{lemma}
\label{l.2.2}If $V,S\in B(H)$ are \textit{commutative then the following
continuity property holds}%
\begin{equation}
\left\vert r\left( V\right) -r\left( S\right) \right\vert \leq r\left(
V-S\right) .  \label{e.2.2}
\end{equation}
\end{lemma}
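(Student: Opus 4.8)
The plan is to exploit the subadditivity property \eqref{e.0.5} for commuting operators together with the fact that $V$ and $S$ commute with their difference $V-S$. First I would observe that $V = S + (V-S)$, where $S$ and $V-S$ commute (since $S(V-S) = SV - S^2 = VS - S^2 = (V-S)S$, using that $V$ and $S$ commute). Applying \eqref{e.0.5} to this sum gives $r(V) \leq r(S) + r(V-S)$, hence $r(V) - r(S) \leq r(V-S)$. By symmetry, writing $S = V + (S-V)$ with $V$ and $S-V$ commuting, I get $r(S) - r(V) \leq r(S-V)$.

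The remaining point is to see that $r(S-V) = r(V-S)$. This follows because $S-V = (-1)(V-S)$ and $r(\lambda T) = |\lambda| r(T)$ for any scalar $\lambda$, a standard consequence of the spectral mapping theorem (or directly of the spectral radius formula \eqref{e.0.2}). Thus both one-sided bounds read $r(V)-r(S) \leq r(V-S)$ and $r(S)-r(V) \leq r(V-S)$, which together give $|r(V)-r(S)| \leq r(V-S)$, as required.

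I do not anticipate a genuine obstacle here; the only care needed is to verify the commutativity hypotheses so that \eqref{e.0.5} legitimately applies — specifically that $S$ commutes with $V-S$ and that $V$ commutes with $S-V$ — but these are immediate from the assumption $VS = SV$. The argument is the operator-theoretic analogue of the reverse triangle inequality, with the spectral radius playing the role of a "norm" on the commutative subalgebra generated by $V$ and $S$.
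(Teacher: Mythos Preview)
Your proposal is correct and follows essentially the same approach as the paper: both arguments write $V=S+(V-S)$ and $S=V+(S-V)$, apply the subadditivity \eqref{e.0.5} to each decomposition after noting the relevant commutativity, and then use $r(-T)=r(T)$ to combine the two one-sided bounds into \eqref{e.2.2}. The only cosmetic difference is that you explicitly verify the commutativity of $S$ with $V-S$ (and $V$ with $S-V$), which the paper simply asserts.
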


\begin{proof}
Since $V-S$ and $S$ are commutative, then by (\ref{e.0.5}) we have%
\begin{equation*}
r\left( V\right) =r\left( V-S+S\right) \leq r\left( V-S\right) +r\left(
S\right)
\end{equation*}%
giving that 
\begin{equation}
r\left( V\right) -r\left( S\right) \leq r\left( V-S\right) .  \label{e.2.3}
\end{equation}%
From (\ref{e.0.2}) we have that $r(-T)=r\left( T\right) $ for any operator $%
T.$

Since the operators $S-V$ and $V$ also commute, then 
\begin{equation*}
r\left( S\right) \leq r\left( S-V\right) +r\left( V\right)
\end{equation*}%
showing that%
\begin{equation*}
r\left( S\right) -r\left( V\right) \leq r\left( S-V\right) =r\left(
V-S\right)
\end{equation*}%
or, equivalently%
\begin{equation}
-r\left( V-S\right) \leq r\left( V\right) -r\left( S\right) .  \label{e.2.4}
\end{equation}%
Utilising (\ref{e.2.3}) and (\ref{e.2.4}) we obtain (\ref{e.2.2}).
\end{proof}

\begin{lemma}
\label{l.2.3} Let $\left( V_{j}\right) _{j\in \mathbb{N}}\subset B\left(
H\right) $ and $V\in B\left( H\right) .$ If $V_{n}\rightarrow V$ in $B\left(
H\right) $ and $V_{j}V=VV_{j}$ for any $j\in \mathbb{N}$, then $%
\lim_{n\rightarrow \infty }r\left( V_{n}\right) =r\left( V\right) .$
\end{lemma}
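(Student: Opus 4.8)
The plan is to reduce the statement to the continuity estimate already proved in Lemma \ref{l.2.2}, together with the elementary bound $r(T)\leq\left\Vert T\right\Vert $ from (\ref{e.0.1}). The key observation is that the hypothesis gives exactly the commutativity needed to invoke Lemma \ref{l.2.2} for each pair $\left( V_{n},V\right) $: one does not need the $V_{j}$ to commute among themselves, only each $V_{j}$ with the limit $V$.

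First I would fix an arbitrary $n\in \mathbb{N}$. Since $V_{n}V=VV_{n}$, the operators $V_{n}$ and $V$ are commutative, so Lemma \ref{l.2.2}, applied with $V\rightsquigarrow V_{n}$ and $S\rightsquigarrow V$, yields
\begin{equation*}
\left\vert r\left( V_{n}\right) -r\left( V\right) \right\vert \leq r\left( V_{n}-V\right) .
\end{equation*}
Next I would apply the fundamental inequality (\ref{e.0.1}) to the bounded linear operator $V_{n}-V$ to obtain $r\left( V_{n}-V\right) \leq \left\Vert V_{n}-V\right\Vert $, and hence
\begin{equation*}
\left\vert r\left( V_{n}\right) -r\left( V\right) \right\vert \leq \left\Vert V_{n}-V\right\Vert \quad \text{for every }n\in \mathbb{N}.
\end{equation*}

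Finally, since $V_{n}\rightarrow V$ in $B\left( H\right) $ means $\left\Vert V_{n}-V\right\Vert \rightarrow 0$ as $n\rightarrow \infty $, letting $n\rightarrow \infty $ in the last display and using the squeeze principle gives $r\left( V_{n}\right) \rightarrow r\left( V\right) $, which is the desired conclusion. There is no real obstacle here: the only point requiring a moment's care is checking that the commutativity hypothesis is precisely what licenses the use of Lemma \ref{l.2.2}, so the argument is essentially just this three-line chain of inequalities.
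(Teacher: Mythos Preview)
Your proof is correct and follows exactly the paper's own argument: apply Lemma~\ref{l.2.2} (licensed by the commutativity $V_nV=VV_n$) to get $\lvert r(V_n)-r(V)\rvert \le r(V_n-V)$, then bound $r(V_n-V)\le \lVert V_n-V\rVert$ and conclude via the squeeze principle. The only cosmetic difference is that you cite (\ref{e.0.1}) for the bound $r\le\lVert\cdot\rVert$, whereas the paper (somewhat imprecisely) refers to (\ref{e.0.2}).
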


\begin{proof}
Utilising (\ref{e.2.2}) and (\ref{e.0.2}) we have%
\begin{equation*}
\left\vert r\left( V_{n}\right) -r\left( V\right) \right\vert \leq r\left(
V_{n}-V\right) \leq \left\Vert V_{n}-V\right\Vert
\end{equation*}%
for any $n\in \mathbb{N}$ which produces the desired result.
\end{proof}

For power series $f\left( z\right) =\sum_{n=0}^{\infty }a_{n}z^{n}$ with
complex coefficients we can naturally construct another power series which
have as coefficients the absolute values of the coefficient of the original
series, namely, $f_{a}\left( z\right) :=\sum_{n=0}^{\infty }\left\vert
a_{n}\right\vert z^{n}$. It is obvious that this new power series have the
same radius of convergence as the original series, and that if all
coefficients $a_{n}\geq 0,$ then $f_{a}=f$.

We can state and prove now our first result:

\begin{theorem}
\label{t.2.1}Let $f\left( z\right) =\sum_{n=0}^{\infty }a_{n}z^{n}$ be a
power series with complex coefficients and convergent on the open disk $%
D\left( 0,R\right) \subset \mathbb{C}$, $R>0.$ If $T\in B\left( H\right) $
with $\left\Vert T\right\Vert <R,$ then%
\begin{equation}
r\left[ f\left( T\right) \right] \leq f_{a}\left( r\left( T\right) \right) .
\label{e.2.5}
\end{equation}
\end{theorem}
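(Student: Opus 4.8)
The plan is to reduce the claim about the infinite power series to the finite case already handled by Lemma~\ref{l.2.1}, and then to pass to the limit using Lemma~\ref{l.2.3}. Concretely, set $T_{m}:=\sum_{n=0}^{m}a_{n}T^{n}$ for the partial sums. Since $\left\Vert T\right\Vert <R$, the series $\sum_{n=0}^{\infty}a_{n}T^{n}$ converges in $B(H)$ to $f(T)$, so $T_{m}\rightarrow f(T)$ in operator norm; moreover every $T_{m}$ is a polynomial in $T$ and hence commutes with $f(T)$, so Lemma~\ref{l.2.3} gives $r(T_{m})\rightarrow r\left[f(T)\right]$.

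Next I would bound $r(T_{m})$ for fixed $m$. The operators $V_{n}:=a_{n}T^{n}$, $n=0,\dots,m$, pairwise commute, so Lemma~\ref{l.2.1} yields
\begin{equation*}
r(T_{m})=r\left(\sum_{n=0}^{m}a_{n}T^{n}\right)\leq\sum_{n=0}^{m}r\left(a_{n}T^{n}\right)=\sum_{n=0}^{m}\left\vert a_{n}\right\vert r\left(T^{n}\right).
\end{equation*}
Here I use that $r(\lambda S)=\left\vert\lambda\right\vert r(S)$, which follows from the spectral radius formula (\ref{e.0.2}). Applying the spectral mapping identity (\ref{e.0.3}), $r(T^{n})=r^{n}(T)$, the right-hand side becomes $\sum_{n=0}^{m}\left\vert a_{n}\right\vert r(T)^{n}$, which is exactly the $m$-th partial sum of $f_{a}\left(r(T)\right)$.

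Finally, since $r(T)\leq\left\Vert T\right\Vert <R$, the point $r(T)$ lies inside the disk of convergence of $f_{a}$ (which has the same radius $R$), so the partial sums $\sum_{n=0}^{m}\left\vert a_{n}\right\vert r(T)^{n}$ converge to $f_{a}\left(r(T)\right)$. Combining,
\begin{equation*}
r\left[f(T)\right]=\lim_{m\rightarrow\infty}r(T_{m})\leq\lim_{m\rightarrow\infty}\sum_{n=0}^{m}\left\vert a_{n}\right\vert r(T)^{n}=f_{a}\left(r(T)\right),
\end{equation*}
which is (\ref{e.2.5}). The only delicate point is the justification that $r(T_{m})\rightarrow r\left[f(T)\right]$, i.e.\ the continuity of the spectral radius along this sequence; this is not automatic for the spectral radius in general, but it is exactly what Lemma~\ref{l.2.3} provides once we observe the commutativity of each partial sum with the limit $f(T)$. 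Everything else is routine bookkeeping with convergent series.
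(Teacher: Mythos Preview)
Your proof is correct and follows essentially the same approach as the paper: apply Lemma~\ref{l.2.1} to the commuting family $a_{n}T^{n}$ to bound $r$ of each partial sum by $\sum_{n=0}^{m}\left\vert a_{n}\right\vert r^{n}(T)$ via (\ref{e.0.3}), then invoke Lemma~\ref{l.2.3} (using that the partial sums commute with $f(T)$) and $r(T)\leq\left\Vert T\right\Vert<R$ to pass to the limit. The arguments are virtually identical.
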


\begin{proof}
Let $m\geq 1$ and define $V_{j}:=a_{j}T^{j}$ for $j\in \left\{
0,...,m\right\} .$ We observe that $V_{j}V_{k}=V_{k}V_{j}$ for any $j,k\in
\left\{ 0,...,m\right\} $ and by Lemma \ref{l.2.1} we then have%
\begin{align}
r\left( \sum_{j=0}^{m}a_{j}T^{j}\right) & \leq \sum_{j=0}^{m}r\left(
a_{j}T^{j}\right) =\sum_{j=0}^{m}\left\vert a_{j}\right\vert r\left(
T^{j}\right)  \label{e.2.6} \\
& =\sum_{j=0}^{m}\left\vert a_{j}\right\vert r^{j}\left( T\right) ,  \notag
\end{align}%
where for the last equality we used the property (\ref{e.0.3}).

Now, consider the sequence $V_{m}:=\sum_{j=0}^{m}a_{j}T^{j}.$ Since $%
\left\Vert T\right\Vert <R$ it follows that $V_{m}\rightarrow f\left(
T\right) $ in $B\left( H\right) .$ Also, since $f\left( T\right) $ commutes
with each of the $a_{j}T^{j}$ it follows that $f\left( T\right) $ also
commutes with $V_{m}$ and by Lemma \ref{l.2.3} we have that 
\begin{equation*}
\lim_{m\rightarrow \infty }r\left( \sum_{j=0}^{m}a_{j}T^{j}\right) =r\left(
\sum_{j=0}^{\infty }a_{j}T^{j}\right) =r\left[ f\left( T\right) \right] .
\end{equation*}%
Therefore, by taking the limit over $m\rightarrow \infty $ and taking into
account the fact that $\sum_{j=0}^{\infty }\left\vert a_{j}\right\vert
r^{j}\left( T\right) $ is convergent since $r\left( T\right) \leq \left\Vert
T\right\Vert <R,$ we deduce the desired result (\ref{e.2.5}).
\end{proof}

\begin{corollary}
\label{c.2.1}Let $f\left( z\right) =\sum_{n=0}^{\infty }a_{n}z^{n}$ be a
power series with nonnegative coefficients and convergent on the open disk $%
D\left( 0,R\right) \subset \mathbb{C}$, $R>0.$ If $T\in B\left( H\right) $
with $\left\Vert T\right\Vert <R,$ then%
\begin{equation}
r\left[ f\left( T\right) \right] \leq f\left( r\left( T\right) \right) .
\label{e.2.7}
\end{equation}
\end{corollary}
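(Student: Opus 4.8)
The plan is to deduce this immediately from Theorem \ref{t.2.1} by exploiting the elementary observation recorded just before that theorem: when all coefficients $a_{n}$ are nonnegative, one has $\left\vert a_{n}\right\vert =a_{n}$ for every $n$, and therefore $f_{a}\left( z\right) =\sum_{n=0}^{\infty }\left\vert a_{n}\right\vert z^{n}=\sum_{n=0}^{\infty }a_{n}z^{n}=f\left( z\right)$ on the common disk of convergence. In particular the two power series $f$ and $f_{a}$ coincide as functions on $D\left( 0,R\right)$, and they share the radius of convergence $R$.

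First I would note that the hypotheses of Corollary \ref{c.2.1} are exactly those of Theorem \ref{t.2.1}: $f$ is a power series convergent on $D\left( 0,R\right)$ with $R>0$, and $T\in B\left( H\right)$ satisfies $\left\Vert T\right\Vert <R$. Hence Theorem \ref{t.2.1} applies and yields
\begin{equation*}
r\left[ f\left( T\right) \right] \leq f_{a}\left( r\left( T\right) \right) .
\end{equation*}
Then I would substitute the identity $f_{a}=f$ established above, with the argument $z=r\left( T\right)$ (which lies in $D\left( 0,R\right)$ since $r\left( T\right) \leq \left\Vert T\right\Vert <R$ by \eqref{e.0.1}), to obtain $f_{a}\left( r\left( T\right) \right) =f\left( r\left( T\right) \right)$, and the chain of (in)equalities gives precisely \eqref{e.2.7}.

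There is essentially no obstacle here: the corollary is a direct specialization of the theorem, and the only thing to check is the trivial verification that $f_{a}$ reduces to $f$ under the sign hypothesis on the coefficients, together with the harmless remark that $r\left( T\right)$ is in the domain where both series converge. No new estimate, induction, or limiting argument beyond those already used in the proof of Theorem \ref{t.2.1} is required.
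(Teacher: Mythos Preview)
Your proposal is correct and matches the paper's approach exactly: the corollary is stated without a separate proof in the paper, being an immediate specialization of Theorem~\ref{t.2.1} via the observation (recorded just before that theorem) that $f_{a}=f$ when all $a_{n}\geq 0$. Nothing further is needed.
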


\section{Inequalities for Two Commuting Operators}

We can consider now the case of two operators.

\begin{theorem}
\label{t.2.2}Let $f\left( z\right) =\sum_{n=0}^{\infty }a_{n}z^{n}$ be a
power series with complex coefficients and convergent on the open disk $%
D\left( 0,R\right) \subset \mathbb{C}$, $R>0.$ If $A,B\in B(H)$ are \textit{%
commutative }and for $p>1,\frac{1}{p}+\frac{1}{q}=1$ 
\begin{equation}
\left\Vert A\right\Vert ^{p},\left\Vert B\right\Vert ^{q}<R,  \label{e.2.8}
\end{equation}%
then we have%
\begin{equation}
r\left[ f\left( AB\right) \right] \leq \min \left\{ L_{1},L_{2}\right\}
\label{e.2.8.a}
\end{equation}%
where%
\begin{equation*}
L_{1}:=f_{a}^{1/p}\left( r^{p}\left( A\right) \right) f_{a}^{1/q}\left(
r^{q}\left( B\right) \right)
\end{equation*}%
and%
\begin{equation*}
L_{2}:=\frac{f_{a}\left( r^{p}\left( A\right) \right) f_{a}\left(
r^{q}\left( B\right) \right) }{f_{a}\left( r^{p-1}\left( A\right)
r^{q-1}\left( B\right) \right) }.
\end{equation*}%
In particular, if $\left\Vert A\right\Vert ^{2},\left\Vert B\right\Vert
^{2}<R,$ then%
\begin{equation}
r^{2}\left[ f\left( AB\right) \right] \leq f_{a}\left( r^{2}\left( A\right)
\right) f_{a}\left( r^{2}\left( B\right) \right) .  \label{e.2.8.b}
\end{equation}
\end{theorem}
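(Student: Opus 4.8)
The plan is to reduce the operator inequality to two scalar inequalities for the series $\sum_{j=0}^{\infty }\left\vert a_{j}\right\vert r^{j}\left( A\right) r^{j}\left( B\right) $, following the scheme of Theorem \ref{t.2.1}. First I would use commutativity to write $\left( AB\right) ^{j}=A^{j}B^{j}$; since $A^{j}$ and $B^{j}$ commute, (\ref{e.0.6}) gives $r^{j}\left( AB\right) =r\left( \left( AB\right) ^{j}\right) \leq r^{j}\left( A\right) r^{j}\left( B\right) $. Putting $V_{j}:=a_{j}\left( AB\right) ^{j}$ (pairwise commuting), Lemma \ref{l.2.1} yields
\[
r\Big( \sum_{j=0}^{m}a_{j}\left( AB\right) ^{j}\Big) \leq \sum_{j=0}^{m}\left\vert a_{j}\right\vert r^{j}\left( AB\right) \leq \sum_{j=0}^{m}\left\vert a_{j}\right\vert r^{j}\left( A\right) r^{j}\left( B\right) .
\]
To pass to the limit I need $\left\Vert AB\right\Vert <R$, which follows from hypothesis (\ref{e.2.8}) by Young's inequality, $\left\Vert AB\right\Vert \leq \left\Vert A\right\Vert \left\Vert B\right\Vert \leq \tfrac{1}{p}\left\Vert A\right\Vert ^{p}+\tfrac{1}{q}\left\Vert B\right\Vert ^{q}<R$; then the partial sums converge to $f\left( AB\right) $ in $B\left( H\right) $, they commute with $f\left( AB\right) $, so Lemma \ref{l.2.3} gives $r\big( \sum_{j=0}^{m}a_{j}\left( AB\right) ^{j}\big) \rightarrow r\left[ f\left( AB\right) \right] $, while the right-hand side converges since $r^{j}\left( A\right) r^{j}\left( B\right) \leq \left( \left\Vert A\right\Vert \left\Vert B\right\Vert \right) ^{j}$. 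Hence $r\left[ f\left( AB\right) \right] \leq \sum_{j=0}^{\infty }\left\vert a_{j}\right\vert r^{j}\left( A\right) r^{j}\left( B\right) $.

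For the bound by $L_{1}$ I would apply H\"{o}lder's inequality to this series with the splitting $\left\vert a_{j}\right\vert =\left\vert a_{j}\right\vert ^{1/p}\left\vert a_{j}\right\vert ^{1/q}$ and $r^{j}\left( A\right) r^{j}\left( B\right) =\big( \left( r^{p}\left( A\right) \right) ^{j}\big) ^{1/p}\big( \left( r^{q}\left( B\right) \right) ^{j}\big) ^{1/q}$, obtaining
\[
\sum_{j=0}^{\infty }\left\vert a_{j}\right\vert r^{j}\left( A\right) r^{j}\left( B\right) \leq \Big( \sum_{j=0}^{\infty }\left\vert a_{j}\right\vert \left( r^{p}\left( A\right) \right) ^{j}\Big) ^{1/p}\Big( \sum_{j=0}^{\infty }\left\vert a_{j}\right\vert \left( r^{q}\left( B\right) \right) ^{j}\Big) ^{1/q}=f_{a}^{1/p}\left( r^{p}\left( A\right) \right) f_{a}^{1/q}\left( r^{q}\left( B\right) \right) ,
\]
the two series being finite since $r^{p}\left( A\right) \leq \left\Vert A\right\Vert ^{p}<R$ and $r^{q}\left( B\right) \leq \left\Vert B\right\Vert ^{q}<R$. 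One also checks, again by Young's inequality, that $r^{p-1}\left( A\right) r^{q-1}\left( B\right) \leq \left( \left\Vert A\right\Vert ^{p}\right) ^{1/q}\left( \left\Vert B\right\Vert ^{q}\right) ^{1/p}<R$, so $f_{a}\left( r^{p-1}\left( A\right) r^{q-1}\left( B\right) \right) $ occurring in $L_{2}$ is well defined.

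For the bound by $L_{2}$ the crux is the elementary series inequality
\[
\Big( \sum_{j}w_{j}u_{j}v_{j}\Big) \Big( \sum_{j}w_{j}u_{j}^{p-1}v_{j}^{q-1}\Big) \leq \Big( \sum_{j}w_{j}u_{j}^{p}\Big) \Big( \sum_{j}w_{j}v_{j}^{q}\Big)
\]
for nonnegative $w_{j},u_{j},v_{j}$. I would prove it by expanding both sides into double sums over $\left( j,k\right) $: the diagonal terms coincide, and for the off-diagonal part it suffices, after symmetrizing in $j\leftrightarrow k$, to establish the termwise estimate $u_{j}v_{j}u_{k}^{p-1}v_{k}^{q-1}+u_{k}v_{k}u_{j}^{p-1}v_{j}^{q-1}\leq u_{j}^{p}v_{k}^{q}+u_{k}^{p}v_{j}^{q}$. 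Using $\left( p-1\right) q=p$ and $\left( q-1\right) p=q$, this follows from Young's inequality applied twice, $u_{j}v_{j}u_{k}^{p-1}v_{k}^{q-1}=\left( u_{j}v_{k}^{q-1}\right) \left( u_{k}^{p-1}v_{j}\right) \leq \tfrac{1}{p}u_{j}^{p}v_{k}^{q}+\tfrac{1}{q}u_{k}^{p}v_{j}^{q}$ together with its mirror image, and summing. Taking $w_{j}=\left\vert a_{j}\right\vert $, $u_{j}=r^{j}\left( A\right) $, $v_{j}=r^{j}\left( B\right) $, the three sums are exactly $f_{a}\left( r^{p}\left( A\right) \right) $, $f_{a}\left( r^{q}\left( B\right) \right) $ and $f_{a}\left( r^{p-1}\left( A\right) r^{q-1}\left( B\right) \right) $, so (excluding the trivial case $f\equiv 0$) division gives $\sum_{j}\left\vert a_{j}\right\vert r^{j}\left( A\right) r^{j}\left( B\right) \leq L_{2}$, and together with the previous paragraph this proves (\ref{e.2.8.a}).

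Finally, for $p=q=2$ condition (\ref{e.2.8}) becomes $\left\Vert A\right\Vert ^{2},\left\Vert B\right\Vert ^{2}<R$, and $L_{1}=f_{a}^{1/2}\left( r^{2}\left( A\right) \right) f_{a}^{1/2}\left( r^{2}\left( B\right) \right) $, so squaring $r\left[ f\left( AB\right) \right] \leq L_{1}$ yields (\ref{e.2.8.b}). I expect the main obstacle to be locating and verifying the scalar inequality behind $L_{2}$ — recognizing that the diagonal terms of the expanded double sums cancel and that the off-diagonal terms yield to a double use of Young's inequality; the operator-theoretic part is a routine adaptation of Theorem \ref{t.2.1}, once one observes that hypothesis (\ref{e.2.8}) forces $\left\Vert AB\right\Vert $, $r^{p}\left( A\right) $, $r^{q}\left( B\right) $ and $r^{p-1}\left( A\right) r^{q-1}\left( B\right) $ all to be below $R$.
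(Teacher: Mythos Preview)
Your proof is correct and follows essentially the same route as the paper: reduce via Lemma~\ref{l.2.1} and submultiplicativity to the scalar series $\sum_j |a_j|\,r^j(A)\,r^j(B)$, bound it by $L_1$ via H\"older, bound it by $L_2$ via the Dragomir--S\'andor-type inequality, and pass to the limit. The one noteworthy difference is that the paper simply \emph{cites} the scalar inequality $\big(\sum w_j u_j v_j\big)\big(\sum w_j u_j^{p-1}v_j^{q-1}\big)\le\big(\sum w_j u_j^{p}\big)\big(\sum w_j v_j^{q}\big)$ from~\cite{DS}, whereas you supply a self-contained proof by expanding into double sums and handling the off-diagonal terms with Young's inequality; you are also more explicit than the paper in checking that $\Vert AB\Vert$, $r^p(A)$, $r^q(B)$ and $r^{p-1}(A)\,r^{q-1}(B)$ all lie below $R$, which the paper leaves implicit.
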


\begin{proof}
Let $m\geq 1$ and write the inequality (\ref{e.2.6}) for $T=AB$ to get%
\begin{equation}
r\left( \sum_{j=0}^{m}a_{j}\left( AB\right) ^{j}\right) \leq
\sum_{j=0}^{m}\left\vert a_{j}\right\vert r^{j}\left( AB\right) .
\label{e.2.9}
\end{equation}%
Since $A$ and $B$ are commutative, then by (\ref{e.0.6}) we also have%
\begin{equation}
\sum_{j=0}^{m}\left\vert a_{j}\right\vert r^{j}\left( AB\right) \leq
\sum_{j=0}^{m}\left\vert a_{j}\right\vert r^{j}\left( A\right) r^{j}\left(
B\right)  \label{e.2.10}
\end{equation}%
for any $m\geq 1.$

Now, by H\"{o}lder's weighted inequality we have%
\begin{equation}
\sum_{j=0}^{m}\left\vert a_{j}\right\vert r^{j}\left( A\right) r^{j}\left(
B\right) \leq \left( \sum_{j=0}^{m}\left\vert a_{j}\right\vert r^{jp}\left(
A\right) \right) ^{1/p}\left( \sum_{j=0}^{m}\left\vert a_{j}\right\vert
r^{jq}\left( B\right) \right) ^{1/q}  \label{e.2.11}
\end{equation}%
for any $m\geq 1.$

Utilising (\ref{e.2.9})-(\ref{e.2.11}) we have%
\begin{equation}
r\left( \sum_{j=0}^{m}a_{j}\left( AB\right) ^{j}\right) \leq \left(
\sum_{j=0}^{m}\left\vert a_{j}\right\vert r^{jp}\left( A\right) \right)
^{1/p}\left( \sum_{j=0}^{m}\left\vert a_{j}\right\vert r^{jq}\left( B\right)
\right) ^{1/q}  \label{e.2.12}
\end{equation}%
for any $m\geq 1.$

From the condition (\ref{e.2.8}) we observe that the series whose partial
sums are involved in the inequality (\ref{e.2.12}) are convergent and by
letting $m\rightarrow \infty $ in (\ref{e.2.12}) we obtain the first
inequality in (\ref{e.2.8.a}).

Further, by utilizing the following H\"{o}lder's type inequality obtained by
Dragomir and S\'{a}ndor in 1990 \cite{DS} (see also \cite[Corollary 2.34]%
{SSD0}):%
\begin{equation}
\sum_{k=0}^{n}m_{k}\left\vert x_{k}\right\vert
^{p}\sum_{k=0}^{n}m_{k}\left\vert y_{k}\right\vert ^{q}\geq
\sum_{k=0}^{n}m_{k}\left\vert x_{k}y_{k}\right\vert
\sum_{k=0}^{n}m_{k}\left\vert x_{k}\right\vert ^{p-1}\left\vert
y_{k}\right\vert ^{q-1},  \label{e.2.13}
\end{equation}%
that holds for nonnegative numbers $m_{k}$ and complex numbers $x_{k},y_{k}$
where $k\in \left\{ 0,...,n\right\} ,$ we observe that the convergence of
the series $\sum_{k=0}^{\infty }m_{k}\left\vert x_{k}\right\vert ^{p}$ and $%
\sum_{k=0}^{\infty }m_{k}\left\vert y_{k}\right\vert ^{q}$ imply the
convergence of the series $\sum_{k=0}^{\infty }m_{k}\left\vert
x_{k}\right\vert ^{p-1}\left\vert y_{k}\right\vert ^{q-1}.$

On applying the inequality (\ref{e.2.13}) we also have%
\begin{equation}
\sum_{j=0}^{m}\left\vert a_{j}\right\vert r^{j}\left( A\right) r^{j}\left(
B\right) \leq \frac{\sum_{j=0}^{m}\left\vert a_{j}\right\vert r^{jp}\left(
A\right) \sum_{j=0}^{m}\left\vert a_{j}\right\vert r^{jq}\left( B\right) }{%
\sum_{j=0}^{m}\left\vert a_{j}\right\vert r^{j\left( p-1\right) }\left(
A\right) r^{j\left( q-1\right) }\left( B\right) }  \label{e.2.14}
\end{equation}%
for any $m\geq 1.$

Utilising (\ref{e.2.9})-(\ref{e.2.10}) and (\ref{e.2.14}) we get%
\begin{equation}
r\left( \sum_{j=0}^{m}a_{j}\left( AB\right) ^{j}\right) \leq \frac{%
\sum_{j=0}^{m}\left\vert a_{j}\right\vert r^{jp}\left( A\right)
\sum_{j=0}^{m}\left\vert a_{j}\right\vert r^{jq}\left( B\right) }{%
\sum_{j=0}^{m}\left\vert a_{j}\right\vert r^{j\left( p-1\right) }\left(
A\right) r^{j\left( q-1\right) }\left( B\right) }  \label{e.2.15}
\end{equation}%
for any $m\geq 1.$

Since all the series whose partial sums are involved in the inequality (\ref%
{e.2.15}) are convergent, then by letting $m\rightarrow \infty $ in (\ref%
{e.2.15}) we obtain the second part of (\ref{e.2.8.a}).
\end{proof}

\begin{remark}
\label{r.2.1}If the power series $f\left( z\right) =\sum_{n=0}^{\infty
}a_{n}z^{n}$ has nonnegative coefficients, then $f_{a}$ in the inequalities (%
\ref{e.2.8.a}) and (\ref{e.2.8.b}) may be replaced with $f.$
\end{remark}

From a different perspective, we also have

\begin{theorem}
\label{t.2.3}Let $f\left( z\right) =\sum_{n=0}^{\infty }a_{n}z^{n}$ be a
power series with complex coefficients and convergent on the open disk $%
D\left( 0,R\right) \subset \mathbb{C}$, $R>0.$ If $A,B\in B(H)$ are \textit{%
commutative }and $\left\Vert A\right\Vert ^{2},\left\Vert B\right\Vert
^{2}<R,$ then%
\begin{align}
r\left[ f\left( AB\right) \right] & \leq \frac{1}{2}\left[ f_{a}\left(
\left\Vert AB\right\Vert \right) +f_{a}\left( \left\Vert A^{2}\right\Vert
^{1/2}\left\Vert B^{2}\right\Vert ^{1/2}\right) \right]  \label{e.2.16} \\
& \leq \frac{1}{2}\left[ f_{a}\left( \left\Vert AB\right\Vert \right)
+f_{a}^{1/2}\left( \left\Vert A^{2}\right\Vert \right) f_{a}^{1/2}\left(
\left\Vert B^{2}\right\Vert \right) \right]  \notag
\end{align}%
and%
\begin{multline}
r\left[ f\left( AB\right) \right]  \label{e.2.16.a} \\
\leq \frac{1}{2}f_{a}\left( \left\Vert AB\right\Vert \right) +\frac{1}{2}%
\min \left\{ f_{a}\left( \left\Vert A\right\Vert ^{1/2}\left\Vert
AB^{2}\right\Vert ^{1/2}\right) ,f_{a}\left( \left\Vert A^{2}B\right\Vert
^{1/2}\left\Vert B\right\Vert ^{1/2}\right) \right\} \\
\leq \frac{1}{2}f_{a}\left( \left\Vert AB\right\Vert \right) +\frac{1}{2}%
\min \left\{ f_{a}^{1/2}\left( \left\Vert A\right\Vert \right)
f_{a}^{1/2}\left( \left\Vert AB^{2}\right\Vert \right) ,f_{a}^{1/2}\left(
\left\Vert A^{2}B\right\Vert \right) f_{a}^{1/2}\left( \left\Vert
B\right\Vert \right) \right\}
\end{multline}%
provided also that $\left\Vert A\right\Vert ,\left\Vert B\right\Vert <R.$
\end{theorem}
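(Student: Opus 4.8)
The plan is to re-run the truncation scheme from the proof of Theorem~\ref{t.2.2}, feeding each partial sum through the bounds (\ref{e.0.8}) and (\ref{e.0.10}) applied to the commuting pair $A^{j},B^{j}$ instead of through H\"{o}lder's inequality. Fix $m\geq 1$. Applying Lemma~\ref{l.2.1} to the pairwise-commuting operators $V_{j}=a_{j}(AB)^{j}$, $j=0,\dots,m$, and using $(AB)^{j}=A^{j}B^{j}$ (valid since $A,B$ commute),
\begin{equation*}
r\left(\sum_{j=0}^{m}a_{j}(AB)^{j}\right)\leq \sum_{j=0}^{m}\left\vert a_{j}\right\vert r\left((AB)^{j}\right)=\sum_{j=0}^{m}\left\vert a_{j}\right\vert r\left(A^{j}B^{j}\right).
\end{equation*}
Since $A^{j}$ and $B^{j}$ commute, (\ref{e.0.8}) gives $r(A^{j}B^{j})\leq \frac{1}{2}\left(\Vert A^{j}B^{j}\Vert +\Vert A^{2j}\Vert ^{1/2}\Vert B^{2j}\Vert ^{1/2}\right)$, and combining this with $A^{j}B^{j}=(AB)^{j}$, $A^{2j}=(A^{2})^{j}$, $B^{2j}=(B^{2})^{j}$ and submultiplicativity of the operator norm,
\begin{equation*}
r\left(A^{j}B^{j}\right)\leq \frac{1}{2}\left(\Vert AB\Vert ^{j}+\left(\Vert A^{2}\Vert ^{1/2}\Vert B^{2}\Vert ^{1/2}\right)^{j}\right).
\end{equation*}

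Summing over $j$ and letting $m\to \infty$ is the delicate step, and it is where the hypothesis $\Vert A\Vert ^{2},\Vert B\Vert ^{2}<R$ comes into play: by the arithmetic--geometric mean inequality, $\Vert AB\Vert \leq \Vert A\Vert \Vert B\Vert \leq \frac{1}{2}(\Vert A\Vert ^{2}+\Vert B\Vert ^{2})<R$ and likewise $\Vert A^{2}\Vert ^{1/2}\Vert B^{2}\Vert ^{1/2}\leq \Vert A\Vert \Vert B\Vert <R$, so the two numerical series on the right converge to $f_{a}(\Vert AB\Vert )$ and $f_{a}(\Vert A^{2}\Vert ^{1/2}\Vert B^{2}\Vert ^{1/2})$, while $\sum_{j=0}^{m}a_{j}(AB)^{j}\to f(AB)$ in $B(H)$ with $f(AB)$ commuting with every partial sum, so Lemma~\ref{l.2.3} yields $r(\sum_{j=0}^{m}a_{j}(AB)^{j})\to r[f(AB)]$. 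This produces the first inequality in (\ref{e.2.16}). The second inequality in (\ref{e.2.16}) is then just the Cauchy--Schwarz inequality for the series defining $f_{a}$: from
\begin{equation*}
f_{a}\left(\Vert A^{2}\Vert ^{1/2}\Vert B^{2}\Vert ^{1/2}\right)=\sum_{n=0}^{\infty }\left(\left\vert a_{n}\right\vert ^{1/2}\Vert A^{2}\Vert ^{n/2}\right)\left(\left\vert a_{n}\right\vert ^{1/2}\Vert B^{2}\Vert ^{n/2}\right)
\end{equation*}
one obtains $f_{a}\left(\Vert A^{2}\Vert ^{1/2}\Vert B^{2}\Vert ^{1/2}\right)\leq f_{a}^{1/2}(\Vert A^{2}\Vert )f_{a}^{1/2}(\Vert B^{2}\Vert )$, which is legitimate because $\Vert A^{2}\Vert \leq \Vert A\Vert ^{2}<R$ and $\Vert B^{2}\Vert \leq \Vert B\Vert ^{2}<R$.

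For (\ref{e.2.16.a}) I would run the identical argument with (\ref{e.0.10}) in place of (\ref{e.0.8}). Applying (\ref{e.0.10}) to $A^{j},B^{j}$ and using the commutativity identities $A^{j}B^{2j}=(AB^{2})^{j}$ and $A^{2j}B^{j}=(A^{2}B)^{j}$, the bound $\Vert C^{j}\Vert \leq \Vert C\Vert ^{j}$, and the elementary fact $\min \{S^{j},T^{j}\}=(\min \{S,T\})^{j}$ for $S,T\geq 0$, one gets $r(A^{j}B^{j})\leq \frac{1}{2}\left(\Vert AB\Vert ^{j}+c^{j}\right)$ with $c=\min \{\Vert A\Vert ^{1/2}\Vert AB^{2}\Vert ^{1/2},\Vert A^{2}B\Vert ^{1/2}\Vert B\Vert ^{1/2}\}$; since $c\leq \Vert A\Vert \Vert B\Vert <R$, the same passage to the limit yields $r[f(AB)]\leq \frac{1}{2}f_{a}(\Vert AB\Vert )+\frac{1}{2}f_{a}(c)$. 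As $f_{a}$ has nonnegative coefficients it is nondecreasing on $[0,R)$, hence $f_{a}(c)=\min \{f_{a}(\Vert A\Vert ^{1/2}\Vert AB^{2}\Vert ^{1/2}),f_{a}(\Vert A^{2}B\Vert ^{1/2}\Vert B\Vert ^{1/2})\}$, giving the first inequality in (\ref{e.2.16.a}); the second follows by the same Cauchy--Schwarz splitting applied to each of these two $f_{a}$-values, and this is where the extra hypothesis $\Vert A\Vert ,\Vert B\Vert <R$ is needed so that the resulting $f_{a}$-factors are finite. The only real obstacle in the whole argument is this bookkeeping---checking that every scalar argument of $f_{a}$ stays strictly below $R$---so that all the limits exist and Lemma~\ref{l.2.3} applies; the rest is routine. (A shorter but less self-contained route would instead apply Theorem~\ref{t.2.1} to $AB$ and then combine (\ref{e.0.8})/(\ref{e.0.10}) with the monotonicity and convexity of $f_{a}$ on $[0,R)$.)
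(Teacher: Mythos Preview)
Your proposal is correct and follows essentially the same route as the paper: bound the partial sums via Lemma~\ref{l.2.1}, apply (\ref{e.0.8}) (respectively (\ref{e.0.10})) to each commuting pair $A^{j},B^{j}$, use submultiplicativity of the norm, pass to the limit via Lemma~\ref{l.2.3}, and finish with Cauchy--Schwarz. The only cosmetic difference is in the handling of the minimum in (\ref{e.2.16.a}): the paper invokes the inequality $\sum_{j}p_{j}\min\{c_{j},d_{j}\}\leq \min\{\sum_{j}p_{j}c_{j},\sum_{j}p_{j}d_{j}\}$, whereas you exploit the sharper fact $\min\{S^{j},T^{j}\}=(\min\{S,T\})^{j}$ together with monotonicity of $f_{a}$; in this particular situation the two devices yield the same bound.
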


\begin{proof}
Since $A$ and $B$ commute, then $A^{j}$ and $B^{j}$ commute for each $j\in 
\mathbb{N}$ and by the inequality (\ref{e.0.8}) and the properties of norms
we have%
\begin{align}
r\left( \left( AB\right) ^{j}\right) & =r\left( A^{j}B^{j}\right) \leq \frac{%
1}{2}\left( \left\Vert A^{j}B^{j}\right\Vert +\left\Vert A^{2j}\right\Vert
^{1/2}\left\Vert B^{2j}\right\Vert ^{1/2}\right)  \label{e.2.17} \\
& \leq \frac{1}{2}\left( \left\Vert AB\right\Vert ^{j}+\left\Vert
A^{2}\right\Vert ^{j/2}\left\Vert B^{2}\right\Vert ^{j/2}\right)  \notag
\end{align}%
for each $j\in \mathbb{N}$.

If we multiply (\ref{e.2.17}) by $\left\vert a_{j}\right\vert ,$ sum over $j$
from $0$ to $m\geq 1$ and use the weighted Cauchy-Bunyakovsky-Schwarz
inequality, we have 
\begin{align}
& \sum_{j=0}^{m}\left\vert a_{j}\right\vert r\left( \left( AB\right)
^{j}\right)  \label{e.2.18} \\
& \leq \frac{1}{2}\left( \sum_{j=0}^{m}\left\vert a_{j}\right\vert
\left\Vert AB\right\Vert ^{j}+\sum_{j=0}^{m}\left\vert a_{j}\right\vert
\left\Vert A^{2}\right\Vert ^{j/2}\left\Vert B^{2}\right\Vert ^{j/2}\right) 
\notag \\
& \leq \frac{1}{2}\left( \sum_{j=0}^{m}\left\vert a_{j}\right\vert
\left\Vert AB\right\Vert ^{j}+\left( \sum_{j=0}^{m}\left\vert
a_{j}\right\vert \left\Vert A^{2}\right\Vert ^{j}\right) ^{1/2}\left(
\sum_{j=0}^{m}\left\vert a_{j}\right\vert \left\Vert B^{2}\right\Vert
^{j}\right) ^{1/2}\right)  \notag
\end{align}%
for any $m\geq 1.$

Now, utilising (\ref{e.2.9}) we can state the following string of
inequalities%
\begin{align}
& r\left( \sum_{j=0}^{m}a_{j}\left( AB\right) ^{j}\right)  \label{e.2.19} \\
& \leq \frac{1}{2}\left( \sum_{j=0}^{m}\left\vert a_{j}\right\vert
\left\Vert AB\right\Vert ^{j}+\sum_{j=0}^{m}\left\vert a_{j}\right\vert
\left\Vert A^{2}\right\Vert ^{j/2}\left\Vert B^{2}\right\Vert ^{j/2}\right) 
\notag \\
& \leq \frac{1}{2}\left( \sum_{j=0}^{m}\left\vert a_{j}\right\vert
\left\Vert AB\right\Vert ^{j}+\left( \sum_{j=0}^{m}\left\vert
a_{j}\right\vert \left\Vert A^{2}\right\Vert ^{j}\right) ^{1/2}\left(
\sum_{j=0}^{m}\left\vert a_{j}\right\vert \left\Vert B^{2}\right\Vert
^{j}\right) ^{1/2}\right)  \notag
\end{align}%
for any $m\geq 1.$

Since all the series whose partial sums are involved in the inequality (\ref%
{e.2.19}) are convergent, then by letting $m\rightarrow \infty $ in (\ref%
{e.2.19}) we obtain the desired inequality (\ref{e.2.16}).

Now, on making use of the first inequality in (\ref{e.0.10})\ we have%
\begin{align}
& r\left( A^{j}B^{j}\right)  \label{e.2.19.a} \\
& \leq \frac{1}{2}\left[ \left\Vert A^{j}B^{j}\right\Vert +\min \left\{
\left\Vert A^{j}\right\Vert ^{1/2}\left\Vert A^{j}B^{2j}\right\Vert
^{1/2},\left\Vert A^{2j}B^{j}\right\Vert ^{1/2}\left\Vert B^{j}\right\Vert
^{1/2}\right\} \right]  \notag \\
& \leq \frac{1}{2}\left[ \left\Vert AB\right\Vert ^{j}+\min \left\{
\left\Vert A\right\Vert ^{j/2}\left\Vert AB^{2}\right\Vert ^{j/2},\left\Vert
A^{2}B\right\Vert ^{j/2}\left\Vert B\right\Vert ^{j/2}\right\} \right] 
\notag
\end{align}%
for any $j\in \mathbb{N}$.

Utilising the elementary inequality for nonnegative numbers $%
p_{j},c_{j},d_{j}$ with $j\in \left\{ 0,...,m\right\} ,m\geq 1$%
\begin{equation*}
\sum_{j=0}^{m}p_{j}\min \left\{ c_{j},d_{j}\right\} \leq \min \left\{
\sum_{j=0}^{m}p_{j}c_{j},\sum_{j=0}^{m}p_{j}d_{j}\right\} ,
\end{equation*}%
we obtain from (\ref{e.2.19}) by multiplying with $\left\vert
a_{j}\right\vert $ and summing over $j\in \left\{ 0,...,m\right\} $ that%
\begin{multline}
\sum_{j=0}^{m}\left\vert a_{j}\right\vert r\left( \left( AB\right)
^{j}\right)  \label{e.2.19.b} \\
\leq \frac{1}{2}\sum_{j=0}^{m}\left\vert a_{j}\right\vert \left\Vert
AB\right\Vert ^{j}+\frac{1}{2}\min \left\{ \sum_{j=0}^{m}\left\vert
a_{j}\right\vert \left\Vert A\right\Vert ^{j/2}\left\Vert AB^{2}\right\Vert
^{j/2},\sum_{j=0}^{m}\left\vert a_{j}\right\vert \left\Vert
A^{2}B\right\Vert ^{j/2}\left\Vert B\right\Vert ^{j/2}\right\}
\end{multline}%
for any $m\geq 1.$

Following a similar argument to the one outlined above we get the first
inequality in (\ref{e.2.16.a}).

The second inequality follows by the Cauchy-Bunyakovsky-Schwarz inequality
and the details are omitted.
\end{proof}

\begin{remark}
\label{r.2.2}If $f\left( z\right) =\sum_{n=0}^{\infty }a_{n}z^{n}$ is a
power series with nonnegative coefficients then $f_{a}$ from the
inequalities (\ref{e.2.16}) and (\ref{e.2.16.a}) may be replaced with $f.$
\end{remark}

Finally, on making use of the inequality%
\begin{equation}
r\left( AB\right) \leq \frac{1}{2}\left\Vert AB\right\Vert +\frac{1}{2}%
\times \left\{ 
\begin{array}{ll}
\left\Vert A\right\Vert ^{1/2}\left\Vert B\right\Vert ^{1/2}\left\Vert
AB\right\Vert ^{1/2} &  \\ 
&  \\ 
\min \left\{ \left\Vert A\right\Vert \left\Vert B^{2}\right\Vert
^{1/2},\left\Vert A^{2}\right\Vert ^{1/2}\left\Vert B\right\Vert \right\} & 
\end{array}%
\right.  \label{e.2.20}
\end{equation}%
we can also state the result:

\begin{proposition}
\label{p.3.1}Let $f\left( z\right) =\sum_{n=0}^{\infty }a_{n}z^{n}$ be a
power series with complex coefficients and convergent on the open disk $%
D\left( 0,R\right) \subset \mathbb{C}$, $R>0.$ If $A,B\in B(H)$ are \textit{%
commutative }and $\left\Vert A\right\Vert ^{2},\left\Vert B\right\Vert
^{2}<R,$ then%
\begin{align}
& r\left[ f\left( AB\right) \right]  \label{e.2.21} \\
& \leq \frac{1}{2}f_{a}\left( \left\Vert AB\right\Vert \right)  \notag \\
& +\frac{1}{2}\times \left\{ 
\begin{array}{ll}
f_{a}\left( \left\Vert A\right\Vert ^{1/2}\left\Vert B\right\Vert
^{1/2}\left\Vert AB\right\Vert ^{1/2}\right) &  \\ 
&  \\ 
\min \left\{ f_{a}\left( \left\Vert A\right\Vert \left\Vert B^{2}\right\Vert
^{1/2}\right) ,f_{a}\left( \left\Vert A^{2}\right\Vert ^{1/2}\left\Vert
B\right\Vert \right) \right\} & 
\end{array}%
\right.  \notag \\
& \leq \frac{1}{2}f_{a}\left( \left\Vert AB\right\Vert \right)  \notag \\
& +\frac{1}{2}\times \left\{ 
\begin{array}{ll}
f_{a}^{1/2}\left( \left\Vert A\right\Vert \left\Vert B\right\Vert \right)
f_{a}^{1/2}\left( \left\Vert AB\right\Vert \right) &  \\ 
&  \\ 
\min \left\{ f_{a}^{1/2}\left( \left\Vert A\right\Vert ^{2}\right)
f_{a}^{1/2}\left( \left\Vert B^{2}\right\Vert \right) ,f_{a}^{1/2}\left(
\left\Vert A^{2}\right\Vert \right) f_{a}^{1/2}\left( \left\Vert
B\right\Vert ^{2}\right) \right\} . & 
\end{array}%
\right.  \notag
\end{align}
\end{proposition}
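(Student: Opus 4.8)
The plan is to follow the same template used in the proofs of Theorems~\ref{t.2.2} and \ref{t.2.3}: start from the scalar inequality (\ref{e.2.20}) applied to the powers $A^{j},B^{j}$, then pass from these scalar estimates to estimates on the partial sums $\sum_{j=0}^{m}a_{j}(AB)^{j}$ via Lemma~\ref{l.2.1} and (\ref{e.0.3}), and finally let $m\to\infty$ using Lemma~\ref{l.2.3}. First I would note that since $A$ and $B$ commute, so do $A^{j}$ and $B^{j}$, and hence $(AB)^{j}=A^{j}B^{j}$; applying (\ref{e.2.20}) with $A\mapsto A^{j}$, $B\mapsto B^{j}$ gives
\begin{equation*}
r\big((AB)^{j}\big)\leq \tfrac12\|A^{j}B^{j}\|+\tfrac12\cdot
\begin{cases}
\|A^{j}\|^{1/2}\|B^{j}\|^{1/2}\|A^{j}B^{j}\|^{1/2}\\[1mm]
\min\big\{\|A^{j}\|\,\|B^{2j}\|^{1/2},\ \|A^{2j}\|^{1/2}\|B^{j}\|\big\}.
\end{cases}
\end{equation*}
Using submultiplicativity of the norm ($\|A^{j}B^{j}\|\leq\|AB\|^{j}$, $\|A^{j}\|\leq\|A\|^{j}$, $\|B^{2j}\|\leq\|B^{2}\|^{j}$, $\|A^{2j}\|\leq\|A^{2}\|^{j}$, etc.) I would bound the right-hand side by the corresponding $j$-th powers, obtaining in the first branch $\tfrac12\|AB\|^{j}+\tfrac12\big(\|A\|\,\|B\|\,\|AB\|\big)^{j/2}$ and in the second branch $\tfrac12\|AB\|^{j}+\tfrac12\min\big\{\big(\|A\|^{2}\|B^{2}\|\big)^{j/2},\ \big(\|A^{2}\|\,\|B\|^{2}\big)^{j/2}\big\}$.

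Next I would multiply through by $|a_{j}|$ and sum over $j\in\{0,\dots,m\}$. By Lemma~\ref{l.2.1} (applied to the commuting operators $V_{j}=a_{j}(AB)^{j}$) together with $r(a_{j}(AB)^{j})=|a_{j}|r^{j}(AB)$ from (\ref{e.0.3}), the left side dominates $r\big(\sum_{j=0}^{m}a_{j}(AB)^{j}\big)$. For the right side, in the first branch the sum is
$\tfrac12\sum_{j=0}^{m}|a_{j}|\,\|AB\|^{j}+\tfrac12\sum_{j=0}^{m}|a_{j}|\big(\|A\|\,\|B\|\,\|AB\|\big)^{j/2}$,
which is exactly the $m$-th partial sum appearing in $\tfrac12 f_{a}(\|AB\|)+\tfrac12 f_{a}\big(\|A\|^{1/2}\|B\|^{1/2}\|AB\|^{1/2}\big)$; in the second branch I would use the elementary inequality $\sum_{j}p_{j}\min\{c_{j},d_{j}\}\leq\min\{\sum_{j}p_{j}c_{j},\sum_{j}p_{j}d_{j}\}$ (already invoked in the proof of Theorem~\ref{t.2.3}) to pull the minimum outside the sum, giving the partial sums of $\tfrac12 f_{a}(\|AB\|)+\tfrac12\min\{f_{a}(\|A\|\,\|B^{2}\|^{1/2}),\ f_{a}(\|A^{2}\|^{1/2}\|B\|)\}$. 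All the scalar series converge because $\|AB\|\leq\|A\|\|B\|<R$ (since $\|A\|^{2},\|B\|^{2}<R$ forces $\|A\|,\|B\|<\max\{R,1\}$, and one should check the arguments of $f_{a}$ are $<R$ — e.g. $\|A\|\|B^{2}\|^{1/2}\leq\|A\|\|B\|$, $\|A^{2}\|^{1/2}\|B\|\leq\|A\|\|B\|$, $\|A\|^{1/2}\|B\|^{1/2}\|AB\|^{1/2}\leq\|A\|\|B\|$, and $\|A\|\|B\|\le\tfrac12(\|A\|^2+\|B\|^2)<R$). By Lemma~\ref{l.2.3}, since $f(AB)$ commutes with each partial sum and $\sum_{j=0}^{m}a_{j}(AB)^{j}\to f(AB)$ in norm, the left sides converge to $r[f(AB)]$. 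Letting $m\to\infty$ yields the first inequality in (\ref{e.2.21}).

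For the second inequality in (\ref{e.2.21}) I would apply the weighted Cauchy--Bunyakovsky--Schwarz inequality to each of the remaining sums: $\sum_{j}|a_{j}|\big(\|A\|\|B\|\big)^{j/2}\|AB\|^{j/2}\leq\big(\sum_{j}|a_{j}|(\|A\|\|B\|)^{j}\big)^{1/2}\big(\sum_{j}|a_{j}|\|AB\|^{j}\big)^{1/2}$, i.e. $f_{a}^{1/2}(\|A\|\|B\|)f_{a}^{1/2}(\|AB\|)$, and similarly $\sum_{j}|a_{j}|\big(\|A\|^{2}\|B^{2}\|\big)^{j/2}\leq f_{a}^{1/2}(\|A\|^{2})f_{a}^{1/2}(\|B^{2}\|)$ and $\sum_{j}|a_{j}|\big(\|A^{2}\|\|B\|^{2}\big)^{j/2}\leq f_{a}^{1/2}(\|A^{2}\|)f_{a}^{1/2}(\|B\|^{2})$, with the minimum preserved under these term-by-term bounds. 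This is entirely routine. The main obstacle, such as it is, is purely bookkeeping: keeping track of which powers of which norms appear in each of the four sub-cases and verifying that every argument fed into $f_{a}$ stays strictly below $R$ so that all the scalar series converge and the limit arguments through Lemmas~\ref{l.2.1} and \ref{l.2.3} are legitimate; there is no genuine analytic difficulty beyond what was already handled in Theorems~\ref{t.2.2} and \ref{t.2.3}, so I would simply remark that the proof follows the same pattern and omit the repetitive details.
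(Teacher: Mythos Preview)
Your proposal is correct and follows exactly the approach the paper intends: the paper does not write out a proof of Proposition~\ref{p.3.1} at all, merely remarking that ``on making use of the inequality (\ref{e.2.20}) we can also state the result,'' so the argument is understood to be a repetition of the proof of Theorem~\ref{t.2.3} with (\ref{e.2.20}) in place of (\ref{e.0.8}) and (\ref{e.0.10}). Your write-up supplies precisely those omitted details, including the elementary $\sum p_j\min\{c_j,d_j\}\le\min\{\sum p_j c_j,\sum p_j d_j\}$ step and the Cauchy--Bunyakovsky--Schwarz application for the second layer of (\ref{e.2.21}); the only cosmetic wrinkle is the parenthetical ``$\|A\|,\|B\|<\max\{R,1\}$,'' which is unnecessary since the AM--GM bound $\|A\|\|B\|\le\tfrac12(\|A\|^2+\|B\|^2)<R$ you give immediately afterward is what is actually needed.
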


\section{Some Examples}

As some natural examples that are useful for applications, we can point out
that, if 
\begin{align}
f\left( z\right) & =\sum_{n=1}^{\infty }\frac{\left( -1\right) ^{n}}{n}%
z^{n}=\ln \frac{1}{1+z},\text{ }z\in D\left( 0,1\right) ;  \label{E1} \\
g\left( z\right) & =\sum_{n=0}^{\infty }\frac{\left( -1\right) ^{n}}{\left(
2n\right) !}z^{2n}=\cos z,\text{ }z\in \mathbb{C}\text{;}  \notag \\
h\left( z\right) & =\sum_{n=0}^{\infty }\frac{\left( -1\right) ^{n}}{\left(
2n+1\right) !}z^{2n+1}=\sin z,\text{ }z\in \mathbb{C}\text{;}  \notag \\
l\left( z\right) & =\sum_{n=0}^{\infty }\left( -1\right) ^{n}z^{n}=\frac{1}{%
1+z},\text{ }z\in D\left( 0,1\right) ;  \notag
\end{align}%
then the corresponding functions constructed by the use of the absolute
values of the coefficients are%
\begin{align}
f_{a}\left( z\right) & =\sum_{n=1}^{\infty }\frac{1}{n}z^{n}=\ln \frac{1}{1-z%
},\text{ }z\in D\left( 0,1\right) ;  \label{E2} \\
g_{a}\left( z\right) & =\sum_{n=0}^{\infty }\frac{1}{\left( 2n\right) !}%
z^{2n}=\cosh z,\text{ }z\in \mathbb{C}\text{;}  \notag \\
h_{A}\left( z\right) & =\sum_{n=0}^{\infty }\frac{1}{\left( 2n+1\right) !}%
z^{2n+1}=\sinh z,\text{ }z\in \mathbb{C}\text{;}  \notag \\
l_{A}\left( z\right) & =\sum_{n=0}^{\infty }z^{n}=\frac{1}{1-z},\text{ }z\in
D\left( 0,1\right) .  \notag
\end{align}%
Other important examples of functions as power series representations with
nonnegative coefficients are:%
\begin{align}
\exp \left( z\right) & =\sum_{n=0}^{\infty }\frac{1}{n!}z^{n}\qquad z\in 
\mathbb{C}\text{,}  \label{E3} \\
\frac{1}{2}\ln \left( \frac{1+z}{1-z}\right) & =\sum_{n=1}^{\infty }\frac{1}{%
2n-1}z^{2n-1},\qquad z\in D\left( 0,1\right) ;  \notag \\
\sin ^{-1}\left( z\right) & =\sum_{n=0}^{\infty }\frac{\Gamma \left( n+\frac{%
1}{2}\right) }{\sqrt{\pi }\left( 2n+1\right) n!}z^{2n+1},\qquad z\in D\left(
0,1\right) ;  \notag \\
\tanh ^{-1}\left( z\right) & =\sum_{n=1}^{\infty }\frac{1}{2n-1}%
z^{2n-1},\qquad z\in D\left( 0,1\right)  \notag \\
_{2}F_{1}\left( \alpha ,\beta ,\gamma ,z\right) & =\sum_{n=0}^{\infty }\frac{%
\Gamma \left( n+\alpha \right) \Gamma \left( n+\beta \right) \Gamma \left(
\gamma \right) }{n!\Gamma \left( \alpha \right) \Gamma \left( \beta \right)
\Gamma \left( n+\gamma \right) }z^{n},\alpha ,\beta ,\gamma >0,  \notag \\
z& \in D\left( 0,1\right) ;  \notag
\end{align}%
where $\Gamma $ is \textit{Gamma function}.

If $T\in B\left( H\right) $ with $\left\Vert T\right\Vert <1,$ then by (\ref%
{e.2.5}) we have the inequalities%
\begin{equation*}
r\left[ \left( I\pm T\right) ^{-1}\right] \leq \left[ 1-r\left( T\right) %
\right] ^{-1},
\end{equation*}%
\begin{equation*}
r\left[ \ln \left( I\pm T\right) ^{-1}\right] \leq \ln \left[ 1-r\left(
T\right) \right] ^{-1},
\end{equation*}%
\begin{equation*}
r\left[ \sin ^{-1}\left( T\right) \right] \leq \sin ^{-1}\left[ r\left(
T\right) \right]
\end{equation*}%
and%
\begin{equation*}
r\left[ _{2}F_{1}\left( \alpha ,\beta ,\gamma ,T\right) \right] \leq
_{2}F_{1}\left( \alpha ,\beta ,\gamma ,r\left( T\right) \right) .
\end{equation*}

If $T\in B\left( H\right) ,$ then by the same inequality we have%
\begin{equation*}
r\left[ \exp \left( T\right) \right] \leq \exp \left[ r\left( T\right) %
\right] ,\text{ }
\end{equation*}%
\begin{equation*}
r\left[ \sin \left( T\right) \right] ,r\left[ \sinh \left( T\right) \right]
\leq \sinh \left( r\left( T\right) \right)
\end{equation*}%
and%
\begin{equation*}
r\left[ \cos \left( T\right) \right] ,r\left[ \cosh \left( T\right) \right]
\leq \cosh \left( r\left( T\right) \right) .
\end{equation*}

If $A,B\in B(H)$ are commutative\textit{\ }and $\left\Vert A\right\Vert
,\left\Vert B\right\Vert <1,$ then by the inequality (\ref{e.2.8.a}) we have 
\begin{equation*}
r\left[ \left( I\pm AB\right) ^{-1}\right] \leq \left\{ 
\begin{array}{ll}
\left( 1-r^{p}\left( A\right) \right) ^{-1/p}\left( 1-r^{q}\left( B\right)
\right) ^{-1/q}, &  \\ 
&  \\ 
\frac{\left( 1-r^{p}\left( A\right) \right) ^{-1}\left( 1-r^{q}\left(
B\right) \right) ^{-1}}{\left( 1-r^{p-1}\left( A\right) r^{q-1}\left(
B\right) \right) ^{-1}}, & 
\end{array}%
\right.
\end{equation*}%
where $p>1,\frac{1}{p}+\frac{1}{q}=1,$ and by the inequalities (\ref{e.2.16}%
), (\ref{e.2.16.a}) and (\ref{e.2.21}) we have 
\begin{multline*}
r\left[ \left( I\pm AB\right) ^{-1}\right] \leq \frac{1}{2}\left(
1-\left\Vert AB\right\Vert \right) ^{-1} \\
+\frac{1}{2}\times \left\{ 
\begin{array}{ll}
\left( 1-\left\Vert A^{2}\right\Vert ^{1/2}\left\Vert B^{2}\right\Vert
^{1/2}\right) ^{-1}, &  \\ 
&  \\ 
\min \left\{ \left( 1-\left\Vert A\right\Vert ^{1/2}\left\Vert
AB^{2}\right\Vert ^{1/2}\right) ^{-1},\left( 1-\left\Vert A^{2}B\right\Vert
^{1/2}\left\Vert B\right\Vert ^{1/2}\right) ^{-1}\right\} , &  \\ 
&  \\ 
\left( 1-\left\Vert A\right\Vert ^{1/2}\left\Vert B\right\Vert
^{1/2}\left\Vert AB\right\Vert ^{1/2}\right) ^{-1}, &  \\ 
&  \\ 
\min \left\{ \left( 1-\left\Vert A\right\Vert \left\Vert B^{2}\right\Vert
^{1/2}\right) ^{-1},\left( 1-\left\Vert A^{2}\right\Vert ^{1/2}\left\Vert
B\right\Vert \right) ^{-1}\right\} . & 
\end{array}%
\right.
\end{multline*}

By the same inequalities, if $A,B\in B(H)$ are commutative, then 
\begin{equation*}
r\left[ \exp \left( AB\right) \right] \leq \left\{ 
\begin{array}{ll}
\exp \left[ \frac{1}{p}r^{p}\left( A\right) +\frac{1}{q}r^{q}\left( B\right) %
\right] , &  \\ 
&  \\ 
\exp \left[ r^{p}\left( A\right) +r^{q}\left( B\right) -r^{p-1}\left(
A\right) r^{q-1}\left( B\right) \right] , & 
\end{array}%
\right.
\end{equation*}%
where $p>1,\frac{1}{p}+\frac{1}{q}=1,$ and%
\begin{multline*}
r\left[ \exp \left( AB\right) \right] \leq \frac{1}{2}\exp \left( \left\Vert
AB\right\Vert \right) \\
+\frac{1}{2}\times \left\{ 
\begin{array}{ll}
\exp \left( \left\Vert A^{2}\right\Vert ^{1/2}\left\Vert B^{2}\right\Vert
^{1/2}\right) , &  \\ 
&  \\ 
\exp \left[ \min \left\{ \left\Vert A\right\Vert ^{1/2}\left\Vert
AB^{2}\right\Vert ^{1/2},\left\Vert A^{2}B\right\Vert ^{1/2}\left\Vert
B\right\Vert ^{1/2}\right\} \right] , &  \\ 
&  \\ 
\exp \left( \left\Vert A\right\Vert ^{1/2}\left\Vert B\right\Vert
^{1/2}\left\Vert AB\right\Vert ^{1/2}\right) , &  \\ 
&  \\ 
\exp \left[ \min \left\{ \left\Vert A\right\Vert \left\Vert B^{2}\right\Vert
^{1/2},\left\Vert A^{2}\right\Vert ^{1/2}\left\Vert B\right\Vert \right\} %
\right] . & 
\end{array}%
\right.
\end{multline*}

\end{document}